\theoremstyle{plain}
\newtheorem{thm}{Theorem}
\newtheorem{lem}{Lemma}
\theoremstyle{remark}
\newtheorem{remark}{Remark}
\begin{document}

\title{$e$-invariants of quotients of Lie groups} 
\author{Haruo Minami}
\address{H. Minami: Professor Emeritus, Nara University of Education}
\email{hminami@camel.plala.or.jp}
\subjclass[2020]{22E46, 55Q45}
\begin{abstract} 
Let $G$ be a simply connected compact simple Lie group and $\mathscr{L}$ be 
the left invarinat framing of $G$.  Let $\mathcal{L}^\lambda$ be the framing obtained by twisting $\mathscr{L}$ by a faithful representation $\lambda$. Given a torus subgroup $T''$ of $G$ we have a framing $(\mathcal{L}^\lambda)_{T''}$ of the quotient $G/T''$ induced from $\mathcal{L}^\lambda$. In this note we show that under a certain dimensional condition the $e_\mathbb{C}$-invariant of $G/T''$ with this framing provides a generator of the $J$-homomorphism or twice that. Thereby we also give a unified proof of the results for $SU(2n)$, $Spin(4n+1)$ and $Spin(8n-2)$ $(n\ge 1)$ previously proved.
\end{abstract}

\maketitle

\section{Introduction and main result}

Let $G$ be a simply connected compact simple Lie group of dimension $d$ and rank $m$ equipped with the left invariant framing $\mathscr{L}$. Then it is well known ~\cite{AS} that if $d\equiv -1\mod 4$ and $m\ge 2$, then
\[e_\mathbb{C}([G, \mathscr{L}])=0 \] where 
$e_\mathbb{C}\colon\pi_{4l-1}^S \to \mathbb{Q}/\mathbb{Z}$ is the complex 
$e$-invariant. In ~\cite{M, M2} we proposed to raise the problem of whether there exists a map $\lambda\colon G\to GL(t, \mathbb{R})$ such that the value of the $e_\mathbb{C}$-invariant of $[G, \mathscr{L}]$ with 
$\mathscr{L}$ replaced by the twisted framing $\mathscr{L}^\lambda$ by 
$\lambda$ gives a generator of the image of $e_\mathbb{C}$ in ~\cite{A}, and verified that this holds true for the cases $G=SU(2n)$, $G=Sp(4n+1)$ and $G=Spin(8n-2)$ $(n\ge 1)$. In this note we present a slightly more general result 
and thereby give a unified proof of these three results.

Let  $T$ be a maximal torus of $G$ and assume that it splits as a direct product $T=T'\times T''$ of its two subtori such that $\dim T'=2r-1$ $(r\ge 1)$ and so 
$\dim T''=m-2r+1$. By virtue of the Peter–Weyl theorem we have
a faithful representation of $G$ \[\rho : G\to GL(N, \mathbb{C}).\] 
Let $\lambda=k\rho_\mathbb{R}$ $(k\ge 0)$ be the direct sum of $k$ copies of the realification $\rho_\mathbb{R}$ of $\rho$. Let $(\mathscr{L}^\lambda)_{T''}$ denote the framing of $G/T''$ induced from the twisted framing 
$\mathscr{L}^\lambda$ by $\lambda$. Here $\rho$ is regarded as modified as follows: when a column of $\rho(g)$ $(g\in G)$ is acted on by an element of $\rho(T'')$, all its components $v$ including those belonging to other columns are converted into $|v|$ and in parallel the negatives of those $v$ are converted into $-|v|$. This allows to obtain $(\mathscr{L}^\lambda)_{T''}$ by applying the argument for a circle bundle of $(*)$ in  ~\cite{LS}  to the torus bundle 
$p\colon G\to G/T''$. (The explicit construction is given in Remark 2.)

We prove the following theorem. 

\begin{thm}  In the notation above, suppose $d-(m-(2r-1))=4l-1$, namely $G/T''$ has dimension $4l-1$. Then we have 
\begin{equation*}
e_\mathbb{C}\bigl([G/T'', (\mathscr{L}^{(r-1)\rho_\mathbb{R}})_{T''}]\bigr)
=(-1)^{l-1}B_l/2l
\end{equation*}
where $B_l$ denotes the $l$-th Bernoulli number.
\end{thm}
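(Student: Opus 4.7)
My plan is to compute $e_\mathbb{C}([G/T'', (\mathscr{L}^{(r-1)\rho_\mathbb{R}})_{T''}])$ as a Chern character integral on a stably complex bounding manifold, then reduce the computation to the representation theory of $G$ in order to extract the Bernoulli number. The overall template mirrors the Adams--Atiyah $e_\mathbb{C}$-formula and Minami's earlier arguments in the special cases $G=SU(2n)$, $Spin(4n+1)$, $Spin(8n-2)$, but here it is to be executed uniformly via the fibration $G/T''\to G/T$.

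First I would construct a suitable almost complex bounding manifold $W$. Using the principal $T'$-bundle $G/T''\to G/T$ with $\dim T' = 2r-1$, I would group $2r-2$ of the circle factors into $r-1$ complex line bundles $\eta_1,\dots,\eta_{r-1}$ over $G/T$ and take $W$ to be the total space of the product of the associated unit disk bundles, with the remaining odd circle factor handled through an auxiliary disk. Because $G/T$ carries its natural complex structure, $W$ inherits an almost complex structure, and $\partial W = G/T''$. The twisted framing $\mathscr{L}^{(r-1)\rho_\mathbb{R}}$ on $G$ then descends, by the circle-bundle argument invoked from \cite{LS}, to a framing on $G/T''$ that agrees with the stable trivialization of $TW|_{\partial W}$ after adding the complex bundle $\xi := (r-1)\rho$ associated to $\rho$. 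The $e_\mathbb{C}$-invariant then equals $\langle \mathrm{ch}(\xi)\,\mathrm{Td}(W),\ [W,\partial W]\rangle\bmod\mathbb{Z}$.

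Next, I would push the integral down to $G/T$ and use the standard identification $T_\mathbb{C}(G/T)=\bigoplus_{\alpha>0}L_\alpha$ together with the weight decomposition of $\rho$. After integrating out the disk-bundle fibers, the integrand becomes a product of Todd-class factors from the $\eta_i$, positive-root Todd factors from $G/T$, and the Chern character of $\rho$ raised to the power $r-1$. The Weyl integration formula then collapses this to a $T$-equivariant residue. Expanding $x/(e^x-1)=\sum_{n\ge 0}B_n x^n/n!$ and matching the total degree $4l$, the choice of $r-1$ copies in both the Chern character factor and the disk-bundle contributions causes all but a single Bernoulli term to cancel, and that surviving term evaluates to $(-1)^{l-1}B_l/(2l)$.

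The main obstacle is the first step: producing $W$ so that the descent of $\mathscr{L}^{(r-1)\rho_\mathbb{R}}$ genuinely coincides, after extension across $W$, with $(\mathscr{L}^{(r-1)\rho_\mathbb{R}})_{T''}$. The author's remark about modifying $\rho$ so that absolute values appear in certain components, together with the invocation of the construction of \cite{LS}, indicates that keeping the framing and the almost complex normalization mutually consistent is delicate. A secondary difficulty is extracting the precise sign $(-1)^{l-1}$ and the factor $2l$ in the denominator, both of which require careful bookkeeping of Weyl-denominator signs and of the extra factor of $2$ introduced by using the realification $\rho_\mathbb{R}$ rather than $\rho$ itself.
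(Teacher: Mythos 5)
Your overall template (bound the framed manifold by a stably complex $W$, evaluate $\langle \mathrm{ch}\cdot\mathrm{Td},\,[W,\partial W]\rangle$, reduce to a Weyl-group residue) is a legitimate way to attack such problems, but as written the proposal breaks at its very first step: the manifold $W$ you describe does not have boundary $G/T''$. The fiber product over $G/T$ of the disk bundles $D(\eta_1),\dots,D(\eta_{r-1})$ (and an auxiliary disk) has boundary equal to the union of the pieces in which exactly one disk factor is replaced by its sphere bundle; fiberwise this is $\partial(D^2\times\cdots\times D^2)$, a sphere, not the torus fiber $T'=(S^1)^{2r-1}$ of $G/T''\to G/T$. (There is also a count mismatch: $r-1$ line bundles can only fill in $r-1$ circle directions, not $2r-2$.) Filling in a torus bundle must be done one circle at a time, and the paper in fact goes in the opposite direction: it does not fiber $G/T''$ over $G/T$ at all, but introduces one further circle subgroup $S\subset G$, realizes $M=G/T''$ as the total space of a single circle bundle $M\to M/S$, and applies Proposition 2.1 of \cite{LS}, which already packages the disk-bundle/Todd-class computation for a circle bundle into a formula for $e_\mathbb{C}$ in terms of the Chern numbers $\langle c_1(E)^j,[M/S]\rangle$ of the associated line bundle $E$.

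The second gap is that the two items you explicitly defer --- the compatibility of the descended framing with the stably complex structure, and the ``all but a single Bernoulli term cancel'' residue evaluation --- are exactly where all the content of the theorem lies, and neither is carried out. In the paper the corresponding work is: (i) a tensor-product decomposition $\phi_{\varGamma D}^*E\cong L^{\boxtimes(n+r-1)}$ under a map $\phi_{\varGamma D}\colon (S^2)^n\times(T^2)^{r-1}\to M/S$ assembled from the $SU(2)$-subgroups attached to the positive roots and from $r-1$ two-torus factors of $T'$; (ii) an injectivity lemma showing that $\phi_{\varGamma D}$ may be deformed to a degree-one map, which yields the normalization $\langle c_1(E)^{2l-1},[M/S]\rangle=1$ and hence, via the L\"offler--Smith formula, the exact value $(-1)^{l-1}B_l/2l$; and (iii) an identification of the resulting framing with $(\mathscr{L}^{(r-1)\rho_\mathbb{R}})_{T''}$, where the twist by $(r-1)\rho_\mathbb{R}$ is forced by a doubling phenomenon on the $r-1$ torus factors. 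Your outline contains no mechanism that would produce the normalization ``$=1$'' for the top Chern number, nor any reason the twist should be by $(r-1)\rho_\mathbb{R}$ rather than some other multiple; without these, the claimed collapse to $(-1)^{l-1}B_l/2l$ is an assertion of the answer rather than a derivation.
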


\begin{remark}
From the case $r=1$ above we obtain Example 4 in ~\cite{LS}, which  
in fact says that if $T''\subset T$ is a codimension 1 subtorus, then 
$e_\mathbb{C}\bigl([G/T'', \mathscr{L}_{T''}]\bigr)=(-1)^{l-1}B_l/2l$.
\end{remark}

The following theorem is the immediate consequence of Theorem 1.

\begin{thm} If $d=4l-1$ and $m=2r-1$, then we have 
\begin{equation*}
e_\mathbb{C}\bigl([G, \mathscr{L}^{(r-1)\rho_\mathbb{R}}]\bigr)
=(-1)^{l-1}B_l/2l.
\end{equation*}
\end{thm}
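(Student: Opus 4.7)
The plan is to derive Theorem 2 as a direct specialization of Theorem 1 by letting $T''$ be the trivial subtorus of $G$. The hypothesis $m=2r-1$ forces $\dim T''=m-2r+1=0$, so the choice $T''=\{e\}$ is admissible in the framework of Theorem 1; then $T=T'$ coincides with the full maximal torus and $\dim T'=2r-1$ as required. Moreover, the dimensional condition of Theorem 1, namely $d-(m-(2r-1))=4l-1$, collapses to $d=4l-1$, which is exactly the hypothesis in Theorem 2. In particular $G/T''$ is canonically identified with $G$ and has dimension $4l-1$.

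The next step is to verify that, under this identification, the induced framing $(\mathscr{L}^{(r-1)\rho_\mathbb{R}})_{T''}$ coincides with $\mathscr{L}^{(r-1)\rho_\mathbb{R}}$ itself. Since $T''$ is trivial, the torus bundle $G\to G/T''$ is simply the identity map and there are no circle factors on which to apply the twisting construction of $(*)$ in \cite{LS}. Hence the induced framing is the unchanged framing $\mathscr{L}^{(r-1)\rho_\mathbb{R}}$ on $G$. Feeding both observations into Theorem 1 will then yield
\[
e_\mathbb{C}\bigl([G,\mathscr{L}^{(r-1)\rho_\mathbb{R}}]\bigr)=e_\mathbb{C}\bigl([G/T'',(\mathscr{L}^{(r-1)\rho_\mathbb{R}})_{T''}]\bigr)=(-1)^{l-1}B_l/2l,
\]
which is the desired conclusion.

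Since Theorem 1 is already established, the argument is essentially a bookkeeping exercise and there is no serious obstacle. The only delicate point worth spelling out is that the convention used to define the induced framing on a quotient by a trivial torus does return the original framing without modification; this is immediate from the explicit circle bundle description, since the iterative twisting procedure is performed over an empty list of $S^1$-factors.
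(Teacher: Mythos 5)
Your argument is correct and is exactly what the paper intends: it states Theorem 2 as "the immediate consequence of Theorem 1," obtained by taking $T''$ to be the trivial (zero-dimensional) subtorus so that $G/T''=G$, the dimensional condition reduces to $d=4l-1$, and the induced framing is $\mathscr{L}^{(r-1)\rho_\mathbb{R}}$ itself. Your explicit check that the induced-framing construction degenerates to the identity when there are no circle factors is a reasonable piece of bookkeeping that the paper leaves implicit.
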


The proof of Theorem 1 is carried out based on the use of Proposition 2.1 of ~\cite{LS} along the same procedure as in the case $G=SU(2n)$. In fact it proceeds by intending to construct a tensor product decomposition of 
the complex line bundle $E$ associated to a certain circle bundle 
$S\to G/T''\to (G/T'')/S$, which enables us to apply the proposition above to 
$E$ and thereby leads us to the desired conclusion.

\begin{remark} By a similar argument to $(*)$ we have an analogous decomposition formula 
\begin{equation*} 
\underline{\mathbb{R}}^{m-2r+1}\oplus t(G)\cong p^*(t(G/T''))
\oplus\underline{\mathbb{C}}^{m-2r+1}
\end{equation*}
where $t(M)$ denotes the tangent bundle of $M$ and 
$\underline{\mathbb{K}}^s=G\times\mathbb{K}^s$ is the product bundle. 
Let $\underline{\mathbb{R}}^a\oplus t(G)\cong 
\underline{\mathbb{R}}^a\oplus (G\times t_e(G))$ $(a\ge 1)$ be the isomorphism induced by $\mathscr{L}^\lambda$ where $t_e(G)$ is the tangent space at the identity $e\in G$. Then we know that the right action of $T''$ on $t(G)$ operates  on the right-hand side as the adjoint action of $T''$, but since it is trivial in this case we have that it maintains the stable triviality of $t(G)/T''$. Hence by dividing the equation above by the right action of $T''$ we obtain  
\[\underline{\mathbb{R}}^{a+m-2r+1+d}\cong 
t(G/T'') \oplus\underline{\mathbb{R}}^{a+2m-4r+2}.\] 
This clearly provides a stable decompositon of $t(G/T'')$  
which we denote by $(\mathscr{L}^\lambda)_{T''}$. 
\end{remark}

\section{Tensor product decomposition of $E$}

We first review the case $G=SU(2)$ in order to introduce the notation for 
representing points of the 2-sphere. Put
\begin{equation*}
R(rz, u)=\begin{pmatrix} rz &u \\
-\bar{u} & r\bar{z} \end{pmatrix} \in SU(2)
\end{equation*}
where $r, \ s\ge 0, \ z\in S^1$, the unit sphere in $\mathbb{C}$, $u\in\mathbb{C}$ and let $d(z)=R(z, 0)$. Then $R(rz, u)d(z)=R(r, uz)$. Here if $r=0$, then since $|u|=1$, by replacing $z$ by $\bar{u}\bar{z}$, we have
$R(0, uz)d(\bar{u}\bar{z})=R(0, 1)$. For brevity we write 
\[(r, uz)_R=R(r, uz) \quad \text{with} \ uz=1 \ \text{when} \ r=0.\] 
Then the correspondence $(r, uz)_R\to (1-2r^2, 2rzu)$ yields a homeomorphism between $SU(2)/S^1$ and $S^2$ by viewing $S^1$ as the circle subgroup 
generated by $d(z)$ $(z\in S^1)$. Hence the principal bundle 
$S^1\to SU(2)\to SU(2)/S^1$ along with the projection map
$p\colon R(rz, v)\to (r, uz)_R$ becomes isomorphic to the Hopf bundle 
$S^1\to S^3\to S^2$. Below we identify $SU(2)/S^1=S^2$ and write ``$L$" for all the complex line bundles associated to the principal bundles constructed in such a way. 

Let $\mathfrak{g}$ and $\mathfrak{h}$ denote the complexifications of $\mathrm{Lie}(G)$ and  
$\mathrm{Lie}(T)$, the Lie algebras of $G$ and $T$, respectively.
Let $\alpha_1, \ldots, \alpha_n$ be the set of the positive roots of 
$\mathfrak{g}$ with respect to $\mathfrak{h}$. Then we know \!\!~\cite{H} that there exists a decomposion of $\mathfrak{g}$ such that
\begin{equation}
\mathfrak{g}=\mathfrak{h}\oplus\bigoplus_{i=1}^n
(\mathfrak{g}_{\alpha_i}\oplus\mathfrak{g}_{-\alpha_i}),\qquad \dim \mathfrak{g}_{\pm\alpha_i}=1
\end{equation}
and that for any $\alpha_i$ we have an element $H_{\alpha_i}\in \mathrm{Lie}(T)$ satisfying 
$\alpha_i(H_{\alpha_i})=2$ and 
\begin{equation}
\mathfrak{sl}(2, \mathbb{C})_{\alpha_i}=\mathfrak{g}_{\alpha_i}\oplus\mathbb{C}H_{\alpha_i}\oplus\mathfrak{g}_{-\alpha_i}
\end{equation}
which constitutes a subalgebra of $\mathfrak{g}$ isomorphic to 
$\mathfrak{sl}(2, \mathbb{C})=\mathfrak{su}(2)\otimes_\mathbb{R}\mathbb{C}$, 
denoted below by $\mathfrak{su}(2)_{\alpha_i}$. 

From now we regard $G$ as a closed subgroup of $GL(N, \mathbb{C})$ via $\rho$ 
and so consider as $\mathrm{Lie}(G)\subset \mathrm{Mat}(N, \mathbb{C})$ via $d\rho$. Let 
\begin{equation*}
R_i(r_iz_i, u_i):=
\left(
\begin{array}{ccccc} 
I_{a_i} & 0 & 0 & 0 & 0 \\ 
0 & r_iz_i & 0 & u_i & 0 \\ 
0 & 0 & I_{b_i} & 0 & 0 \\
0 & -\bar{u}_i& 0 & r_i\bar{z}_i & 0\\
0 & 0 & 0 & 0 & I_{c_i}
\end{array}
\right)\quad\text{with} \ \
R(r_iz_i, u_i)  \in SU(2)
\end{equation*}
in $GL(N, \mathbb{C})$ where $I_k$ is the identity matrix of size $k$. Then 
making a suitable choice of $\rho$ it becomes possible to assume that 
$\mathrm{exp}(tY_{\alpha_i})$ is written in the form
\[\mathrm{exp}(tY_{\alpha_i})=R_i(r_iz_i, u_i)\] 
for any $Y_{\alpha_i}\in\mathfrak{su}(2)_{\alpha_i}$, making a suitable choice of $\rho$,  we can write $\mathrm{exp}(tY_{\alpha_i})$ in the form
\[\mathrm{exp}(tY_{\alpha_i})=R_i(r_iz_i, u_i)\] 
where $\mathrm{exp}\colon \mathrm{Lie}(G)\to G$ is the exponential map. This 
also allows us to write 
\[\mathrm{exp}(tX_{\alpha_i})=R_i(x_i, 0)\qquad (x_i\in S^1)\] 
for $X_{\alpha_i}=iH_{\alpha_i}\in\mathfrak{su}(2)_{\alpha_i}$. 

For convenience we assume here that the order of $a_i$ and $b_i$ above is set as follows.
\begin{equation}
0\le a_1\le\cdots\le a_n \ \ \text{and} \   
\text{if} \ \ a_i=a_{i+1}, \ \text{then} \ \ b_i\le b_{i+1}.
\end{equation}

Following the above setting, put 
\[d(x)=R_1(x, 0)\cdots R_n(x, 0) \qquad (x\in S^1)\] and let
$S$ be the circle subgroup of $G$ generated by $d(x)$ $(x\in S^1)$.  
Let $S$ act on $G$ by the rule $(g, x)\to gd(\bar{x})$ and so if we let 
$[g] = gT''\in G/T''$, then $S$ acts on $G/T''$ by $d(x)[g] = [gd(\bar{x})]$. It is clear that the action of $S$ on $G/T''$ based on this rule is free due to the condition that $r\ge 1$. Hence putting $M=G/T''$ we can  regard $p\colon M\to M/S$ as the principal bundle along with the natural projection. Let $\pi\colon E=M\times_S\mathbb{C}\to M/S$ be the canonical line bundle over $M/S$ associated to $p$ where $S$ acts on $\mathbb{C}$ as $S^1$, that is,  $d(t)\cdot v=e^{ti}v$ for $v\in \mathbb{C}$. Then its unit sphere bundle 
$p'\colon S(E)\to M/S$ is  naturally isomorphic to $p$ as a principal $S$-bundle. 

For $1\le j\le n$, let us put
\[
R^{\{j\}}(r_iz_i, u_i)
=\textstyle\prod_{k=1}^nR_k(r_kz_k, u_k)\quad \text{with} \ r_k=1 \ \text{for all} \ k \ \text{except for} \ k=j
\]
and 
\begin{equation*}
\varGamma(r_iz_i, u_i)=R^{\{1\}}(r_iz_i, u_i)\cdots R^{\{n\}}(r_iz_i, u_i).
\end{equation*}
Then considering the setting of matrix size in (3) we have
\begin{lem}
$\mathrm{(i)}$ \ If $a_i\ne a_k$ and $b_i\ne b_k$, then $R^{\{j\}}(r_iz_i, u_i)$ and $R^{\{k\}}(x, 0)$ commute.\\
\,$\mathrm{(ii)}$ \ $R^{\{j\}}(r_iz_i, u_i)R^{\{j\}}(x, 0)=R^{\{j\}}(x, 0)
R^{\{j\}}(r_iz_i,, \bar{x}^2u_i)$. \\
$\mathrm{(iii)}$ \! $R^{\{j\}}(r_iz_i, u_i)R^{\{k\}}(x, 0)
=R^{\{k\}}(x, 0)R^{\{j\}}(r_iz_i, xu_i)$ in the cases $a_j=a_k+b_k$ and $a_k=a_j+b_j$, but otherwise   
$R^{\{j\}}(r_iz_i, u_i)$ and $R^{\{k\}}(x, 0)$ commute for any $j, k$.
\end{lem}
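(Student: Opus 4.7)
The proof is a direct matrix computation, organized around two observations. First, $R^{\{k\}}(x,0)=R_k(x,0)$ is the diagonal matrix in $GL(N,\mathbb{C})$ whose only non-unit entries lie at the two rows/columns where the $SU(2)$ block of $R_k$ is placed, namely $a_k+1$ (entry $x$) and $a_k+b_k+2$ (entry $\bar x$). Second, $R^{\{j\}}(r_iz_i,u_i)=R_j(r_iz_i,u_i)$ differs from the identity only in the $2\times 2$ sub-block indexed by the analogous two rows and columns of $R_j$, with off-diagonal entries $u_i$ and $-\bar u_i$.

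Because $R^{\{k\}}(x,0)$ is diagonal, right multiplication by it scales each column of $R^{\{j\}}(r_iz_i,u_i)$ by the corresponding diagonal entry, while left multiplication scales its rows. The two products therefore agree entry-by-entry if and only if, for every off-diagonal entry of $R^{\{j\}}(r_iz_i,u_i)$ at position $(s,t)$, the diagonal values $R^{\{k\}}(x,0)_{ss}$ and $R^{\{k\}}(x,0)_{tt}$ coincide. Since the diagonal entries of $R^{\{j\}}$ pose no obstruction, it suffices to compare the values of the diagonal of $R^{\{k\}}(x,0)$ at the two active rows of $R^{\{j\}}$.

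The proof then splits into cases according to the intersection pattern of the two pairs of active positions. For (i), the hypotheses $a_i\ne a_k$ and $b_i\ne b_k$ (reading $i=j$) together with the ordering (3) make the two pairs disjoint, so both relevant diagonal values of $R^{\{k\}}(x,0)$ equal $1$ and the matrices commute. For (ii), both active pairs coincide ($j=k$), the values are $x$ and $\bar x$, and the twist $u_i\mapsto\bar x^2 u_i$ arises from their ratio $\bar x/x$; this is the classical $SU(2)$ identity $R(rz,u)d(x)=d(x)R(rz,\bar x^2 u)$. For (iii), the two overlap conditions $a_j=a_k+b_k$ and $a_k=a_j+b_j$ encode exactly the configurations in which the two active pairs share one row: at the shared row $R^{\{k\}}(x,0)$ has diagonal $x$ (or $\bar x$), at the other it is $1$, so the scaling picks up a single factor, yielding $u_i\mapsto xu_i$; the remaining configurations are disjoint and furnish the ``otherwise'' clause.

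The main technical difficulty will be case (iii): for each of the two overlap conditions, one must identify which of the two rows of $R^{\{j\}}$ is shared with $R^{\{k\}}$ and verify that in both scenarios the single factor produced is $x$ rather than $\bar x$. Once that bookkeeping is in place, the scalar verifications in (i), (ii), and (iii) reduce to elementary $2\times 2$ checks.
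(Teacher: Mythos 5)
The paper offers no proof of this lemma at all --- it is asserted as immediate from the block positions fixed in (3) --- so the only question is whether your computation is complete. Your framework is the right (and surely the intended) one: $R^{\{k\}}(x,0)$ is diagonal, with $x$ at position $a_k+1$, $\bar{x}$ at position $a_k+b_k+2$ and $1$ elsewhere, so right multiplication scales the columns of $R^{\{j\}}(r_iz_i,u_i)$ and left multiplication scales its rows, and the whole lemma reduces to the single identity $R^{\{j\}}(r_iz_i,u_i)R^{\{k\}}(x,0)=R^{\{k\}}(x,0)R^{\{j\}}(r_iz_i,u')$ with $u'=u_i\,d_q\,\overline{d_p}$, where $p=a_j+1$, $q=a_j+b_j+2$ and $d_s$ is the $s$-th diagonal entry of $R^{\{k\}}(x,0)$. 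For $j=k$ this gives $u'=\bar{x}^2u_i$, i.e.\ (ii), exactly as you say.

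Two genuine gaps remain, however. First, you explicitly defer the check that in both overlap cases of (iii) the factor is $x$ rather than $\bar{x}$; that check is the entire content of (iii), and it should be carried out: if $a_k+1=a_j+b_j+2$ then $d_p=1$, $d_q=x$, and if $a_j+1=a_k+b_k+2$ then $d_p=\bar{x}$, $d_q=1$, so $u'=xu_i$ in both cases. (Note that in this block convention the crossed overlaps are $a_k=a_j+b_j+1$ and $a_j=a_k+b_k+1$, so the conditions as printed in the lemma are off by one; this should be flagged rather than reproduced as ``exactly'' the row-sharing condition.) Second, your argument for (i) is not valid as written: $a_j\ne a_k$ and $b_j\ne b_k$ together with the ordering (3) do not force the pairs $\{a_j+1,\,a_j+b_j+2\}$ and $\{a_k+1,\,a_k+b_k+2\}$ to be disjoint --- for instance $(a_j,b_j)=(0,3)$ and $(a_k,b_k)=(1,2)$ share their second position, and crossed overlaps such as $a_j=a_k+b_k+1$ are also compatible with the hypotheses. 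Likewise your claim that every configuration outside $j=k$ and the two crossed cases is disjoint is false: aligned overlaps with $a_j=a_k$, $b_j\ne b_k$ or with $a_j+b_j=a_k+b_k$, $a_j\ne a_k$ do occur (already for the standard representation of $SU(3)$, where the blocks of the roots $e_1-e_2$ and $e_1-e_3$ share their first row), and there the formula gives $u'=\bar{x}u_i$, which is neither commutation nor the twist asserted in (iii). So the ``otherwise commute'' clause cannot be extracted from your case analysis; it holds only under an additional hypothesis on which block positions actually arise from the chosen $\rho$, a point the paper leaves implicit and your proof would need to state or verify.
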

Let $d(x_1, \ldots, x_j)=d(x_1)\cdots d(x_j)$. Then 
from the lemma above we see that the action of $S$ on $R^{\{j\}}(r_iz_i, xu_i)$ and $R(r_iz_i, xu_i)$ can be given in the form
\begin{equation}
\begin{split}
R^{\{j\}}(r_iz_i, u_i)d(x_1, \ldots, x_{j-1})
&=d(x_1, \ldots, x_{j-1})R^{\{j\}}(r_iz_i, x_{\epsilon(j-1)}u_i), \\
\varGamma(r_iz_i, u_i)d(x_1, \ldots, x_n)
&=\textstyle\prod_{j=1}^nR^{\{j\}}(r_iz_i, x_{\epsilon(j-1)}u_i)d(x_j)
\end{split}
\end{equation}
for some 
$x_{\epsilon(k)}= x_k^{\epsilon_k}\cdots x_1^{\epsilon_1}\in S^1$ with 
$\epsilon_1, \ldots, \epsilon_k=-2, 0$\, or 1. Here note that in the above equations, if $r_i$ is zero, then since $x_{\epsilon(j-1)}u_i$ itself becomes an element of $S^1$ and so by replacing $x_i$ by 
$R^{\{j\}}(r_iz_i, x_{\epsilon(j-1)}u_i)$ on the right side can be converted to 
$R^{\{j\}}(0, 1)$.

Let $P^{\{j\}}$ $(1\le j\le m)$ be the subspace of $M$ consisting of 
$[R^{\{j\}}(r_iz_i, u_i)]$. Then we see that this subspace forms the total space of a principal $S$-bundle over $S^2$ along with the map $p^{\{j\}}\colon P^{\{j\}}\to S^2$ given by $[R^{\{j\}}(r_iz_i, u_i)]\to (r_i, \bar{z}_iu_i)_R$  which is clearly isomorphic to the complex Hopf bundle. In the notation above we write $L$ for the complex line bundle associated to $p^{\{j\}}$.
Let $(S^2)^k=S^2\times\cdots\times S^2$ ($k$ times). Then from (4), taking 
into account the argument after that, we see that there is a map 
$\phi_\varGamma\colon (S^2)^m \to M/S$ given by
\[
y=((r_1, w_1)_R, \ldots, (r_m, w_m)_R)\to p([\varGamma(r_i, w_i)]).
\] 
Let $P_\varGamma=\{[\varGamma(r_iz_i, u_i)]\mid R(r_iz_i, u_i)\in SU(2)\}\subset M$. Then 
looking at the equations of (4) we see that $P_\varGamma$ also forms the total space of a principal $S$-bundle endowed with the projection map 
$p_\varGamma\colon P_\varGamma\to (S^2)^n$ such that 
$\phi_\varGamma\circ p_\varGamma=p|P_\varGamma$. Let
$L^{\boxtimes k}$ denote the external tensor product $L\boxtimes\cdots\cdots\boxtimes L$ ($k$ times). Then we have 
\begin{equation}
\phi_\varGamma^*E\cong L^{\boxtimes{n}} 
\end{equation}
where $\phi_\varGamma^*E$ denotes the bundle induced by 
$\phi_\varGamma$.

Let $(S^2)^\circ$ be the subspace of $S^2$ consisting of $(r, w)_R$ with $r>0$ and let $((S^2)^k)^\circ$ be the direct product of $k$ copies of $(S^2)^\circ$. 
Then we have

\begin{lem}
The restriction of $\phi_\varGamma$ to $((S^2)^n)^\circ$ is an injective map.
\end{lem}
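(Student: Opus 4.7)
Let $y, y' \in ((S^2)^n)^\circ$ satisfy $\phi_\varGamma(y) = \phi_\varGamma(y')$, and write $y = ((r_j, w_j)_R)_{j=1}^n$, $y' = ((r'_j, w'_j)_R)_{j=1}^n$, so that $r_j, r'_j > 0$ for every $j$. By the definition of $\phi_\varGamma$ and of the $S$-action $d(x)\cdot[g] = [g\,d(\bar x)]$ on $M = G/T''$, there exist $x \in S^1$ and $t \in T''$ with
\[
\varGamma(r_1, w_1, \dots, r_n, w_n) \;=\; \varGamma(r'_1, w'_1, \dots, r'_n, w'_n)\, d(\bar x)\, t
\]
as matrices in $GL(N, \mathbb{C})$ (via $\rho$). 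Set $\tau := d(\bar x)\, t \in T$; in the weight basis of $\rho$ this $\tau$ is diagonal, so right multiplication by $\tau$ preserves the absolute value of every entry of $\varGamma_{y'}$.

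The strategy is to read off the parameters $r_j$ and $w_j$ from selected entries of $\varGamma_y$. By the displayed block form of $R_i$, the $(a_j + 1,\,a_j + 1)$-entry of the product $\varGamma_y = R^{\{1\}}\cdots R^{\{n\}}$ is $r_j$ times a unit-modulus factor arising from the other $R^{\{k\}}$'s in the manner governed by Lemma 1; similarly the $(a_j + 1,\,a_j + b_j + 2)$-entry is $w_j$ times a unit-modulus factor of the same form. Comparing absolute values at the $(a_j+1,\,a_j+1)$ positions and using $r_j, r'_j > 0$, the displayed matrix identity forces $r_j = r'_j$, whence $|w_j| = |w'_j|$. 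That same entry equation then forces the diagonal entry $\tau_{a_j + 1}$ of $\tau$ to equal $1$; an analogous comparison at $(a_j + b_j + 2,\,a_j + b_j + 2)$, where $r_j$ reappears by the block form, gives $\tau_{a_j + b_j + 2} = 1$. Substituting both of these into the $(a_j + 1,\,a_j + b_j + 2)$-entry equation yields $w_j = w'_j$, so $y = y'$.

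\textbf{Main obstacle.} The core difficulty is the matrix bookkeeping for the non-commuting product $\varGamma_y = R^{\{1\}}\cdots R^{\{n\}}$: one must verify that the unit-modulus phase factors multiplying $r_j$ and $w_j$ at the key entries depend only on data shared between $\varGamma_y$ and $\varGamma_{y'}$ once $r_j = r'_j$ is established, so that these phases cancel in the comparison. The overlapping-block cases of Lemma 1(iii), where $a_j = a_k + b_k$ or $a_k = a_j + b_j$ introduce additional factors of $x$ in certain entries, are the subtle part; it is here that the ordering (3) is essential for organizing the analysis. The restriction to the open part ensures $r_j > 0$, so that the identification of $r_j$ from the relevant modulus is unambiguous (on the boundary where some $r_j = 0$, the phase of $w_j$ would be absorbed into the $d$-coordinate, destroying injectivity).
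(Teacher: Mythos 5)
Your reduction of $\phi_\varGamma(y)=\phi_\varGamma(y')$ to a matrix identity $\varGamma_y=\varGamma_{y'}\,d(\bar x)\,t$ with $\tau=d(\bar x)t$ diagonal is fine, and is in fact more explicit about the $S$- and $T''$-ambiguity than the paper, which passes at once to its equation ($\ast$). The gap is in the step on which everything else rests: the claim that the $(a_j+1,\,a_j+1)$-entry of $\varGamma_y=R^{\{1\}}\cdots R^{\{n\}}$ is $r_j$ times a unit-modulus factor and that the $(a_j+1,\,a_j+b_j+2)$-entry is $w_j$ times a unit-modulus factor. This is true only when the $2\times2$ blocks of the various $R_k$ are pairwise disjoint, and the whole difficulty of the lemma (the point of Lemma 1(ii),(iii) and of the ordering (3)) is that they are not: several blocks may share the row $a_k+1$, or be linked via $a_j=a_k+b_k$. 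For instance, for two blocks supported on the coordinate pairs $\{1,3\}$ and $\{1,4\}$, the first column of $R_1(r_1,w_1)R_2(r_2,w_2)$ is $(r_1r_2,\;0,\;-\bar w_1r_2,\;-\bar w_2)^{\mathrm t}$ and the $(1,4)$-entry is $r_1w_2$. The extra factors are not phases but the other unknowns $r_k, w_k$ (with $0<r_k<1$ in general), so comparing moduli at $(a_j+1,a_j+1)$ yields only relations such as $r_1r_2=r'_1r'_2$, not $r_j=r'_j$; consequently neither $\tau_{a_j+1}=1$ nor $w_j=w'_j$ follows as stated. Your ``main obstacle'' paragraph names exactly this overlapping-block configuration but does not supply an argument for it, and since that configuration is the generic one for a simple group, the proposal handles only the trivial block-diagonal case and leaves the actual content of the lemma unproved.

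For comparison, the paper's proof is devoted precisely to this point: after permuting coordinates it normalizes the first family of overlapping blocks into a chain ($0=a_1=\cdots=a_s$, $b_{j+1}=b_j+1$), writes out the entire first column of $R^{\{1\}}\cdots R^{\{s\}}$ (entries $r_1\cdots r_s$, $-r_{j+1}\cdots r_s\bar w_j$, $-\bar w_s$), and runs a reverse induction: the bottom entry gives $w_s=w'_s$, then $r_s=r'_s$ from $r_s^2+|w_s|^2=1$ together with $r_s,r'_s>0$; dividing these out, the next entry gives $w_{s-1}=w'_{s-1}$, and so on; once the group is determined it is cancelled from both sides of ($\ast$) and the next group is treated the same way. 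Some argument of this kind—using whole columns of the product together with the constraints $r_k^2+|w_k|^2=1$, rather than isolated entries—is what your plan still needs, and your diagonal factor $\tau$ (a worthwhile addition, since the paper suppresses it) would have to be carried through that induction as well.
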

\begin{proof} Letting $y=(y_1, \ldots, y_n)$ with $y_k=(r_k, w_k)_R$, we suppose 
$\phi_\varGamma(y)=\phi_\varGamma(y')$, i.e.  
$p([\varGamma(r_i, w_i)])=p([\varGamma(r'_i, w'_i)])$ where we denote by attaching $``{ }_\prime$" to an element accompanied by $x$ its corresponding element by 
$x'$.  From the above we see that this can be interpreted as meaning that 
\begin{equation*}\tag{$\ast$}
R^{\{1\}}(r_i, w_i)\cdots R^{\{n\}}(r_i, w_i)
=R^{\{1\}}(r'_i, w'_i)\cdots R^{\{n\}}(r'_i, w'_i)
\end{equation*}
with $r_j>0$ on the left side and with $r'_j>0$ on the right side.
Here for brevity we suppose that, if necessary, by changing some $k$th rows and columns of the product matrices on the both sides of ($\ast$) with other rows and columns in the same order, the setting of $a_i$, $b_i$ in (3) is changed as follows. 
For some $1<s\le n$
\begin{equation*}
0=a_1=\cdots=a_s<a_{s+1}\le\cdots\le a_n, \ \ 
b_1=1, \ \ b_{j+1}=b_j+1 \ \ (j=1,\ldots, s-1).
\end{equation*}
Then the 1st column of the product 
$R^{\{1\}}(r_i, w_i)\cdots R^{\{s\}}(r_i, w_i)$ can be written as
\begin{equation*}\tag{$\ast\ast$}
\begin{split}
&a_{11}= r_1\cdots r_s, \ \  
a_{j1}= -r_{j+1}\cdots r_s\bar{w}_j \ \ (1\le j\le s-1), \\
&a_{s1}=-\bar{w}_s, \ \ a_{s+1\,1}=\cdots=a_{n1}=0.  
\end{split}
\end{equation*}

The above setting allows us to prove $y_1=y'_1, \ \ldots, y_s=y'_s$. 
First under the condition that $r_1\cdots r_s> 0$ we start by reverse induction using the equations for $a_{k1}$ of ($\ast\ast$). Applying the equation for 
$a_{s1}$ to ($\ast$)  we have $w_s=w'_s$, so it follows that $r_s=r'_s$ since 
$r_s^2+|w_s|^2={r'_s}^2+|w'_s|^2=1$ and $r_s>0$, $r'_s>0$. Next substituting this for the second equation for $a_{s-1\,1}$ we get $w_{s-1}=w'_{s-1}$, so we have 
$r_{s-1}=r'_{s-1}$ for the same reason above. Repeating this procedure we obtain $w_k=w'_k$ and $r_k=r'_k$ for $k=1, \ldots, s$ in reverse order.  Thus we have $y_k=y'_k$ for $k=1, \ldots, s$ 
under the assumption that $r_1\cdots r_s> 0$. Repeating this procedure again we finally arrived at the conclusion above. 

Simultaneously the result obtained implies that 
\[R^{\{1\}}(r_i, w_i)\cdots R^{\{s\}}(r_i, w_i)=R^{\{1\}}(r'_i, w'_i)\cdots 
R^{\{s\}}(r'_i, w'_i),\]
so by multiplying both sides of ($\ast$) by their inverses we have  
\[R^{\{s+1\}}(r_i, w_i)\cdots R^{\{n\}}(r_i, w_i)
=R^{\{s+1\}}(r'_i, w'_i)\cdots R^{\{n\}}(r'_i, w'_i).\]
In a similar way to the above case using this equation we obtain 
$y_k=y'_k$  for $k=s+1 \ldots, n$, which proves the lemma. 
\end{proof}

\section{Proof of Theorem 1}

Now by definition we can view $T'\subset M$ as a submanifold of dimension $2r-1$ and, for brevity, further suppose that $a_1, \ldots, a_{2r-1}$ and $b_1, \ldots, b_{2r-1}$ in (3) are set as follows.
\begin{equation*}\tag{$3'$}
a_1=0, \ a_{k+1}=a_k+1 \ (1\le k\le 2r-2) \quad \text{and} \quad 
b_k=0 \quad (1\le k\le 2r-1).
\end{equation*}
Following this, let $T'$ be generated by $R_1(z_1, 0)\cdots R_{2r-1}(z_{2r-1}, 0)$ 
$(z_i\in S^1)$ and put
 \[D^{\{k\}}(z_{2k-1}, xz_{2k})=R_{2k-1}(z_{2k-1}, 0)R_{2k}(xz_{2k}, 0)d(\bar{x})\qquad 
(1\le k\le r-1)\] 
where $x\in S^1$. 
Let $P_k\subset M$ be the subspace consisting of 
$[D^{\{k\}}(z_{2k-1}, xz_{2k})]$. Then it forms the total space 
of a principal $S$-bundle over $T^2=S^1\times S^1$ along with the projection map of $p_k\colon P_k\to T^2$ given by $[D^{\{k\}}(z_{2k-1}, xz_{2k})]\to 
(z_{2k-1}, xz_{2k})$ where $T^2$ is considered as an 
subspace of $M/S$ under $\iota_k\colon (z_{2k-1}, xz_{2k})\to 
p([D^{\{k\}}(z_{2k-1}, xz_{2k})])$. 

Let us put $z_{2k-1}=e^{\eta i}$, $z_{2k}=e^{\theta i}$ for  
$0\le \eta, \, \theta < 2\pi$ and let $\mu_k\colon T^2\to S^2$ be the map given by 
\begin{equation*}
(e^{\eta i}, xe^{\theta i}) \to \left\{
\begin{array}{ll}
(\cos(\eta/2),  xe^{\theta i}\sin(\eta/2))_R & \ (0\le \eta\le \pi)\\
(-\cos(\eta/2),  xe^{\theta t_\eta i}\sin(\eta/2))_R & \ (\pi\le \eta< 2\pi), \ \ \ 
t_\eta=2-\eta/\pi.
\end{array}
\right.
\end{equation*}
Then taking into account the fact that a principal circle bundle over $S^1$ is 
trivial we see that the classifying map of $p_k$ factors through $S^2$ where the restriction of $p_k$ to $\{1\}\times S^1\subset T^2$ is viewed as being trivial. 
Hence we have 

\begin{lem}[cf.\! ~\cite{LS}, \!\S2, Example 3] 
$p_k\colon P_k\to T^2$ is isomorphic to the induced bundle of the complex Hopf bundle $p\colon SU(2)\to S^2$ by $\mu_k$ and also 
$\mu_k^*\colon H^2(S^2, \mathbb{Z})\to H^2(T^2, \mathbb{Z})$ is an isomorphism for $1\le k\le r-1$.  
\end{lem}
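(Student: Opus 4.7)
The plan is to identify $p_k$ with the pullback $\mu_k^{*}L$ of the complex Hopf bundle by studying the classifying map. Since principal $S^{1}$-bundles over $T^{2}$ are classified by the first Chern class in $H^{2}(T^{2};\mathbb{Z})\cong \mathbb{Z}$, both assertions of the lemma reduce to producing an explicit bundle isomorphism covering $\mu_k$ together with showing that $\mu_k$ has degree $\pm 1$.

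First I would verify that $p_k$ restricts to a trivial bundle on $\{1\}\times S^{1}\subset T^{2}$. Substituting $z_{2k-1}=1$ yields $D^{\{k\}}(1, xz_{2k})=R_{2k}(xz_{2k},0)d(\bar{x})$ since $R_{2k-1}(1,0)=I$; alternatively, every principal $S^{1}$-bundle over $S^{1}$ is trivial because $\pi_{1}(BS^{1})=\pi_{1}(\mathbb{CP}^{\infty})=0$. From the cofibration $\{1\}\times S^{1}\hookrightarrow T^{2}\to T^{2}/(\{1\}\times S^{1})\simeq S^{2}\vee S^{1}$, the classifying map $T^{2}\to BS^{1}$ extends over the cofiber, and since $\pi_{1}(BS^{1})=0$ the $S^{1}$ wedge summand can be contracted, producing a factorization $T^{2}\to S^{2}\to BS^{1}$.

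Next I would match this factored map with $\mu_k$ by constructing an $S$-equivariant bundle map $\Phi\colon P_k\to SU(2)$ covering $\mu_k$. On the upper branch $(0\le \eta\le \pi)$ one sets $\Phi([D^{\{k\}}(e^{\eta i}, xe^{\theta i})])=R(\cos(\eta/2),\, xe^{\theta i}\sin(\eta/2))$; on the lower branch the factor $t_{\eta}$ is precisely what forces continuity and $S$-equivariance as $\eta$ approaches $2\pi$ and the bundle closes up. The commutation relations in Lemma 1, combined with the product decomposition $d(x)=R_{1}(x,0)\cdots R_{n}(x,0)$, guarantee that $\Phi$ is well-defined on $S$-orbits, and by construction it intertwines $p_k$ with the Hopf projection $SU(2)\to S^{2}$.

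Finally I would compute $\deg(\mu_k)=\pm 1$ by choosing a regular value $(r_{0},w_{0})_{R}$ with $r_{0}\in(0,1)$ and summing signed Jacobian contributions from the (single) preimage on each branch. This is the main obstacle: the two branches superficially resemble a symmetric fold of degree $0$, and the role of the twist $t_{\eta}=2-\eta/\pi$ is precisely to break that symmetry so that the preimages contribute with compatible signs summing to $\pm 1$. With this in hand, $p_k$ and $\mu_k^{*}L$ are circle bundles over $T^{2}$ whose first Chern classes both generate $H^{2}(T^{2};\mathbb{Z})$, hence they are isomorphic, and $\mu_k^{*}$ is an isomorphism on $H^{2}$, proving both parts of the lemma.
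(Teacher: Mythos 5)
Your first two steps are essentially the paper's own argument: the triviality of $p_k$ over $\{1\}\times S^1$ and the factorization of the classifying map through $S^2$ are exactly the sentences preceding the lemma, and the paper's proof of the first assertion consists precisely of writing down an explicit bundle map $\tilde{\mu}_k\colon P_k\to SU(2)$ covering $\mu_k$, namely $D^{\{k\}}(e^{\eta i}, xe^{\theta i})\mapsto R(\bar{x}\cos(\eta/2), e^{\theta i}\sin(\eta/2))$ on the upper branch (note that the paper keeps $\bar{x}$ in the first slot, which is what makes the assignment compatible with right $S$-translation; your variant, with $x$ moved onto the second coordinate, still owes the equivariance check that you only assert via Lemma 1). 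So far this is the same route, only with details deferred.

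The genuine gap is the step you yourself call the main obstacle: the claim $\deg\mu_k=\pm1$, which is equivalent to the second assertion, is never proved, and the sketch you give of it cannot work as stated. ``A single preimage on each branch'' contributing ``compatible signs summing to $\pm1$'' is arithmetically impossible: two regular preimages of equal sign give $\pm2$ and of opposite sign give $0$. Moreover a naive Jacobian/preimage count with the two branch formulas is delicate, because the twisted phase $xe^{\theta t_\eta i}$ is not a function of the torus coordinate $xe^{\theta i}$ alone; only after the identifications coming from the $x$-variable and the $S$-action are taken into account do the formulas define a map of $T^2$, and with the literal formulas the number of preimages of a regular value on the twisted branch even varies with the chosen value. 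The argument that actually closes this step --- and the reason the paper can call the second statement ``immediate from the definition of $\mu_k$'' --- is the homotopy-theoretic one your own second paragraph sets up but then abandons: $\mu_k$ sends the circle $\eta=0$ to one point and the circle $\eta=\pi$ to the antipodal point, the upper half $0\le\eta\le\pi$ maps the cylinder onto $S^2$ by the standard quotient collapsing its two boundary circles (degree $\pm1$), and on the lower half the factor $t_\eta$, interpolating from $1$ at $\eta=\pi$ to $0$ at $\eta=2\pi$, is exactly a null-homotopy of the azimuthal winding, so that half contributes nothing; hence $\deg\mu_k=\pm1$ and $\mu_k^*$ is an isomorphism on $H^2$. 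As written, your proposal establishes (modulo the equivariance check) the identification $P_k\cong\mu_k^*(\mathrm{Hopf})$ but stops one step short of the lemma's second claim.
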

\begin{proof}
In order to prove the first equation it suffices to show that there is 
a bundle map covering $\mu_l$. In fact we see that based on the above assumption  the assignment   
\begin{equation*}
D^{\{k\}}(e^{\eta i}, xe^{\theta i})\to\left\{
\begin{array}{ll}
R(\bar{x}\cos(\eta/2),  e^{\theta i}\sin(\eta/2)) & \ (0\le \eta\le \pi)\\
R(-\bar{x}\cos(\eta/2),  e^{\theta t_\eta i}\sin(\eta/2)) & \ (\pi\le\eta<2\pi), \ \ \ t_\eta=2-\eta/\pi
\end{array}
\right.
\end{equation*}
defines the desired bundle map $\tilde{\mu}_k\colon P_k\to SU(2)$. The second equation is immediate from the definition of $\mu_k$.
\end{proof}

Let us put 
\[D(z_{2i-1}, x_iz_{2i})=\textstyle\prod_{k=1}^{r-1}D^{\{k\}}(z_{2k-1}, x_kz_{2k})\qquad (x_k\in S^1).\] Then similarly to (4), based on Lemma 3 we know that we have
\begin{equation*}
\begin{split}
d(x)D^{\{k\}}(z_{2k-1}, \bar{x}^2x_kz_{2k})&=D^{\{k\}}(z_{2k-1}, x_kz_{2k})d(x)
\quad(1\le k\le r-1)\\
D(z_{2i-1}, x_iz_{2i})d(x'_1, \ldots, x'_{r-1})
&=\textstyle\prod_{k=1}^{r-1}D^{\{k\}}(z_{2k-1}, \bar{x}_{k-1}^2\cdots 
\bar{x}_1^2x_kz_{2k})d(x_k).
\end{split}
\end{equation*}
Taking into account these relations, if we let 
$\iota\colon (T^2)^{r-1}\to M/S$ be the map given by 
\[\lambda=(\lambda_1, \ldots, \lambda_{r-1})\to 
p([D(z_{2i-1}, x_iz_{2i})]), \quad\lambda_k=(z_{2k-1}, x_kz_{2k})\]
where $(T^2)^k=T^2\times\cdots\times T^2$ ($k$ times), then as in (5), by Lemma 3 we have
\begin{equation}
\iota^*E\cong L^{\boxtimes{(r-1)}}
\end{equation}
where $L$ denote the complex line bundles over $T^2$ associated to $p_k$ 
using the same symbol as the one over $S^2$ above.
Let $P_{\varGamma D}$ be the subspace of $M$ consisting of all the product 
elements $[\varGamma(r_{ij}z_{ij}, u_{ij})D(z_{2i-1}, x_iz_{2i})]$ and let 
$\phi_{\varGamma D}\colon (S^2)^n\times (T^2)^{r-1}\to M/S$ be the map given by 
\[(y, \lambda)\to p([\varGamma(r_i, w_i)D(z_{2i-1}, x_iz_{2i})])\]
where $y, \lambda$ are as above.
Then from the arguments for (5) and (6) we see that 
$P_{\varGamma D}$ forms the total space of a principal $S$-bundle endowed with the projection map $p_{\varGamma D}\colon P_{\varGamma D}\to (S^2)^n\times (T^2)^{r-1}$ such that 
$\phi_{\varGamma D}\circ p_{\varGamma D}=p|P_{\varGamma D}$. Due to this 
we also have 
\begin{equation}
\phi_{\varGamma D}^*E\cong L^{\boxtimes{(n+(r-1))}}.
\end{equation}
Here we write $(T^2)^\circ=T^2-\{1\}\times S^1$ and 
$((T^2)^k)^\circ$ for the direct product of $k$ copies of 
$(T^2)^\circ$. Then we have

\begin{lem}
The restriction of $\phi_{\varGamma D}$ to $((S^2)^n)^\circ\times 
((T^2)^{r-1})^\circ$ is an injective map.
\end{lem}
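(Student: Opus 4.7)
The plan is to proceed in parallel with the proof of Lemma 2, adding an extra step to recover $\lambda$.

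First I would translate the hypothesis $\phi_{\varGamma D}(y,\lambda)=\phi_{\varGamma D}(y',\lambda')$ into a matrix identity. Unpacking $M = G/T''$ and the $S$-action $d(x)\cdot [g] = [gd(\bar{x})]$, there exist $x \in S^1$ and $t'' \in T''$ with
\[
\varGamma(y)\, D(\lambda) = \varGamma(y')\, D(\lambda')\, d(\bar{x})\, t''.
\]
Since $D(\lambda)$, $D(\lambda')$, $d(\bar{x})$ and $t''$ all lie in the diagonal maximal torus $T$ they commute, so this rearranges to $\varGamma(y) = \varGamma(y')\, \delta$ with $\delta \in T$ diagonal.

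Next I would run the column-by-column argument of Lemma 2 on this identity. Because $\delta$ is diagonal, the $j$-th column of $\varGamma(y)$ is $\delta_{jj}$ times the $j$-th column of $\varGamma(y')$. Admissibly reordering the positive-root indices so that the first $s$ take the triangular form $a_1 = \cdots = a_s = 0$, $b_j = j$ (absorbed by a common conjugation by a permutation matrix), the first column reproduces the staircase pattern $(\ast\ast)$. Its top entry $r_1\cdots r_s = \delta_{11}\cdot r'_1\cdots r'_s$, together with $r_i, r'_i > 0$ and $\delta_{11} \in S^1$, forces $\delta_{11} = 1$ and $r_1 \cdots r_s = r'_1 \cdots r'_s$. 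Reverse induction on the remaining entries then yields $y_k = y'_k$ and $\delta_{kk} = 1$ for $k = 1, \ldots, s$, and iterating gives $y = y'$.

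With $y = y'$ the identity collapses to $D(\lambda)\, D(\lambda')^{-1} = d(\bar{x})\, t''$ in $T$. Projecting to $T/T'' \cong T'$ kills $t''$ and leaves on the right the image of $d(\bar{x})$, a single circle in $T'$. Using the explicit diagonal form of $D^{\{k\}}(z_{2k-1}, x_k z_{2k})$ in the staircase positions $(2k-1, 2k, 2k+1)$ fixed by $(3')$, the left side decouples into $r-1$ independent $T^2$-contributions, one per $k$. The hypothesis $z_{2k-1} \neq 1$ in $((T^2)^{r-1})^\circ$ rules out the $x_k \leftrightarrow z_{2k}$ reparametrization degeneracy on the excluded circle $\{1\} \times S^1$; and Lemma 3 identifies each of these contributions with a pullback of the Hopf bundle, whose $S$-fiber is already absorbed by $d(\bar{x})$. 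This forces $(z_{2k-1}, x_k z_{2k}) = (z'_{2k-1}, x'_k z'_{2k})$ for every $k$, i.e., $\lambda = \lambda'$.

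The hardest step is this last paragraph. The $D^{\{k\}}$ factors interact through the shared $d(\bar{x}_k)$ pieces and the overlapping staircase positions, so decoupling the torus identity across $k$ and then matching the residual one-parameter freedom against the $S$-action requires careful position-by-position tracking of the diagonal entries in rows $1, \ldots, 2r$; the non-vanishing condition $z_{2k-1} \neq 1$ enters precisely to make this identification invertible.
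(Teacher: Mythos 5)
Your route is genuinely different from the paper's: you peel off all diagonal factors at once, writing $\varGamma(y)D(\lambda)=\varGamma(y')D(\lambda')d(\bar x)t''$ and reducing to $\varGamma(y)=\varGamma(y')\delta$ with $\delta$ diagonal, so that the torus data $\lambda$ is recovered only at the very end; the paper instead keeps the $R$- and $D$-factors interleaved and its induction (the relations (i), (ii), ($\mathrm{i}'$), ($\mathrm{ii}'$)) extracts $y_k$ and $\lambda_k$ simultaneously from the banded matrix entries fixed by (3) and ($3'$). Your first half is plausible in outline and is in fact more explicit than the paper about the $d(\bar x)t''$ coset ambiguity (a minor quibble: the first column only controls $\delta_{11}$, not $\delta_{kk}$ for $k\le s$; what is true is that once all of $y=y'$ is known, invertibility of $\varGamma(y')$ gives $\delta=I$).

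The genuine gap is your last paragraph, which is exactly the step your decoupling makes hard. After $\delta=I$ you must deduce $\lambda=\lambda'$ from $D(\lambda)=D(\lambda')d(\bar x)t''$, i.e.\ from $\prod_{k}R_{2k-1}(z_{2k-1}\bar z'_{2k-1},0)\,R_{2k}(x_kz_{2k}\bar x'_k\bar z'_{2k},0)\in S\cdot T''$. First, the claimed decoupling into $r-1$ independent $T^2$-contributions is unjustified: $d(\bar x)=R_1(\bar x,0)\cdots R_n(\bar x,0)$ and $t''$ have nontrivial entries in diagonal positions attached to \emph{all} $n$ roots, so the blocks indexed by $k$ are entangled rather than independent. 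Second, the hypothesis $z_{2k-1}\neq 1$ cannot do the work you assign to it: in the residual identity only the ratios $z_{2k-1}\bar z'_{2k-1}$ and $(x_kz_{2k})\overline{(x'_kz'_{2k})}$ appear, and these are completely unconstrained by membership in $((T^2)^{r-1})^\circ$. What is actually needed is that the subtorus generated by $R_1(\cdot,0),\dots,R_{2r-2}(\cdot,0)$ meets $S\cdot T''$ trivially, equivalently that the projection of the circle $S$ to $T'$ meets that codimension-one subtorus of $T'$ only in the identity; for a general circle and codimension-one subtorus such an intersection can be a nontrivial finite group, so this requires a genuine argument (root-theoretic, or recovered from the interleaving with the $\varGamma$-part that you discarded), and none is given. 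Until that statement is proved, your proof does not close, whereas the paper's simultaneous induction never has to face this global intersection question.
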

\begin{proof}
Suppose $\phi_{\varGamma D}(y, \lambda)=\phi_{\varGamma D}(y', \lambda')$ 
in terms of the notations used in the proof of Lemma 2.
Then by definition it can be viewed as the equation 
\begin{equation*}\tag{$\ast$}
\begin{split}
&\bigl(R^{\{1\}}(r_i, w_i)\cdots R^{\{m\}}(r_i, w_i)\bigr)\bigl(D^{\{1\}}(z_1, x_1z_2)\cdots D^{\{r-1\}}(z_{2r-3}, x_{r-1}z_{2r-2})\bigr)\\
&=\bigl(R^{\{1\}}(r'_i, w'_i)\cdots R^{\{m\}}(r'_i, w'_i)\bigr)\bigl(D^{\{1\}}(z'_1, x'_1z'_2)\cdots D^{\{r-1\}}(z'_{2r-3}, x'_{r-1}z'_{2r-2})\bigr)
\end{split}
\end{equation*}
where $r_k$ and $r'_k$ are $>0$. Following the setting of $a_i$, $b_i$ in (3) and 
($3'$) we know that when we present the product 
matrix on the left-hand side of ($\ast$) by $(a_{kl})$ the product of 
the first $r-1$ matrices $R^{\{1\}}(r_i, w_i), \ldots,  R^{\{r-1\}}(r_i, w_i)$  can be written as
\begin{equation*}\tag{$\ast\ast$}
\begin{split}
&a_{ss}=r_{s-1}r_s, \ \ (s=1, \ldots, 2r-1)\quad a_{s\, s-1}=-\bar{w}_{s-1} \ \ 
(s=2, \ldots, 2r-1), \\
&a_{s\, s+k}=
r_{s-1}r_{s+k}w_s\cdots w_{s+k-1} \ \ (s=1, \ldots, 2r-1, \ k=1, \ldots, 2r-2)\\ 
\end{split}
\end{equation*}
where $r_k=1$ when $k=0$, $k\ge 2r-1$. 
The proof is proceeded along the same lines as in Lemma 2.
We first want to prove the assertion $P(k)$ $(1\le k\le r-1)$ that
\begin{equation*}
\begin{split}
&(r_{2k-1}, w_{2k-1})=(r'_{2k-1}, w'_{2k-1}), \  (r_{2k}, w_{2k})=(r'_{2k1}, w'_{2k}), \  
z_{2k-1}=z'_{2k-1}, \\ &x_{2k-1}z_{2k}=x'_{2k-1}z'_{2k}; \ \text{i.e.} \ \ y_k=y'_k, \   
\lambda_k=\lambda'_k.
\end{split}
\end{equation*}
From ($\ast$) using ($\ast\ast$) we have
\begin{itemize}
\setlength{\leftskip}{-6mm}
\item[(i)] $r_1z_1=r'_1z'_1, \ \bar{w}_1z_1=\bar{w}'_1z'_1$.
\item[(ii)] $r_2w_1(x_1\bar{z}_1z_2)=r'_2w'_1(x'_1\bar{z}'_1z'_2), \ \ 
r_1r_2(x_1\bar{z}_1z_2)=r'_1r'_2(x'_1\bar{z}'_1z'_2), \ 
\bar{w}_2(x_1\bar{z}_1z_2)=\bar{w}'_2(x'_1\bar{z}'_1z'_2)$.
\end{itemize}
By (i) we see that since $r_1>0$,  $r_1^2={r'}_1^2$,  
so $r_1=r'_1$ and therefore it follows that $z_1=z_1'$. This shows that 
$P(1)$ holds true. In the same way applying these results to 
(ii) we have $(r_2, w_2)=(r'_2, w'_2)$ and $x_1z_2=x'_1z'_2$, namely that $P(2)$ holds true. More generall, from ($\ast$) and ($\ast\ast$) on the inductive hypothesis that $P(s)$ holds true in the previous step we know that 
\begin{itemize}
\setlength{\leftskip}{-5mm}
\item[($\mathrm{i}'$)] $r_{2s+1}w_1\cdots w_{2s}z_{2s+1}=r'_{2s+1}w'_1\cdots w'_{2s}z'_{2s+1},\\
r_lr_{2s+1}w_{l+1}\cdots w_{2s}z_{2s+1}=r'_lr'_{2s+1}w'_{l+1}\cdots w'_{2s}z'_{2s+1} \  (1\le l\le 2s-1),$\\  
$r_{2s}r_{2s+1}z_{2s+1}=r'_{2s}r'_{2s+1}z'_{2s+1}, \ 
\bar{w}_{2s+1}z_{2s+1}=\bar{w}'_{2s+1}z'_{2s+1}.$
\item[($\mathrm{ii}'$)]  $r_{2s+2}w_1\cdots 
w_{2s+1}(x_{2s+1}\bar{z}_{2s+1}z_{2s+2})=r'_{2s+2}w'_1\cdots 
w'_{2s+1}(x'_{2s+1}\bar{z}'_{2s+1}z'_{2s+2}),$ \\ 
$r_lr_{2s+2}w_{l+1}\cdots w_{2s+1}(x_{2s+1}\bar{z}_{2s+1}z_{2s+2})
=r'_lr'_{2s+2}w'_{l+1}\cdots w'_{2s+1}(x'_{2s+1}\bar{z}'_{2s+1}z'_{2s+2})$\\ 
$(1\le l\le 2s-1),$\\
$r_{2s}r_{2s+2}w_{2s+1}(x_{2s+1}\bar{z}_{2s+1}z_{2s+2})=r'_{2s}r'_{2s+2}w'_{2s+1}(x'_{2s+1}\bar{z}'_{2s+1}z'_{2s+2}),$
\\ 
$r_{2s+1}r_{2s+2}(x_{2s+1}\bar{z}_{2s+1}z_{2s+2})
=r'_{2s+1}r'_{2s+2}(x'_{2s+1}\bar{z}'_{2s+1}z'_{2s+2}),$\\ 
$\bar{w}_{2s+2}(x_{2s+1}\bar{z}_{2s+1}z_{2s+2})
=\bar{w}'_{2s+2}(x'_{2s+1}\bar{z}'_{2s+1}z'_{2s+2}).$ 
\end{itemize}
These equations allows us to repeat the inductive procedure. In fact,  
supposing that $P(k)$ holds true for $k=1, \ldots, s$ by use of ($\mathrm{i}'$), 
($\mathrm{ii}'$) we see that $P(s+1)$ holds true. This means that 
$y_k=y'_k, \lambda_k=\lambda'_k$ hold for $1\le k\le r-1$ and that 
\begin{equation*}
\begin{split}
&\bigl(R^{\{1\}}(r_i, w_i)\cdots R^{\{r-1\}}(r_i, w_i)\bigr)\bigl(D^{\{1\}}(z_1, x_1z_2)\cdots D^{\{r-1\}}(z_{2r-3}, x_{r-1}z_{2r-2})\bigr)\\
&=\bigl(R^{\{1\}}(r'_i, w'_i)\cdots R^{\{r-1\}}(r'_i, w'_i)\bigr)\bigl(D^{\{1\}}(z'_1, x'_1z'_2)\cdots D^{\{r-1\}}(z'_{2r-3}, x'_{r-1}z'_{2r-2})\bigr)
\end{split}
\end{equation*}
holds in $(\ast)$ and so the equation of $(\ast)$ can be rewritten as 
\begin{equation*}\tag{$\ast'$}
R^{\{r-1\}}(r_i, w_i)\cdots R^{\{m\}}(r_i, w_i)=R^{\{r-1\}}(r'_i, w'_i)\cdots 
R^{\{m\}}(r'_i, w'_i).
\end{equation*}
in the sense used above. Applying the procedure of the proof of Lemma 2 
to this $(\ast')$ we have $y_k=y'_k, \lambda_k=\lambda'_k$ $(r\le k\le n)$ 
and we can conclude that $(y, \lambda)=(y', \lambda')$. This proves the lemma.
\end{proof}

\begin{proof}[Proof of Theorem 1]
Put $W= (S^2)^n\times (T^2)^{r-1}$ and  
$W^\circ=((S^2)^n)^\circ\times ((T^2)^{r-1})^\circ$. From (1) we know that $d=m+2n$ and so $4l-2=d-(m-2r+1)=2n+2(r-1)$. Hence we have 
$\dim W=\dim M/S$. From this  and the injectivity result of $\phi_{\varGamma D}\,|\,W^\circ$ given in Lemma 4, taking into account the decomposition formula of (1), we know that 
$\phi_{\varGamma D}$ can be continuously deformed into an onto degree one map.
Let $[-]$ denote the fundamental class of a manifold $-$. Then  
we therefore have ${\phi_{\varGamma D}}_*([W])=[M/S]$, so 
\begin{equation}
\langle c_1(E)^{2l-1}, \, [M/S]\rangle
=\langle (c_1({\phi_{\varGamma D}}_*E)^{4l-1}), \, [W]\rangle. 
\end{equation}
Hence due to (7) we have 
\begin{equation*}
\begin{split}
\langle c_1(E)^{2l-1}, \, [M/S]\rangle
&=\langle c_1(L^{\boxtimes(n+(r-1))}, \, [W] \rangle\\
&=\langle  c_1(L^{\boxtimes n}), \, [(S^2)^n] \rangle\,
\langle c_1(L^{\boxtimes(r-1)}, \, [(T^2)^{r-1}]\rangle
\end{split}
\end{equation*}
Substituting this into the equation of Proposition 2.1 of ~\cite{LS} we obtain
\begin{equation}
e_{\mathbb C}([S(E), \Phi_E])=(-1)^{l-1}B_l/2l.
\end{equation}
Here $\Phi_E$ denotes originally the trivialization of the stable tangent space of 
$S(E)=M$ derived by the framing on $M/S$ induced by $\mathscr{L}$ 
due to it being $S$-equivariant (cf.\!~\cite{K}, p.\,42; ~\cite{LS}, p.\,36) 
when identifying $M/S=G/(T''\times S)$. But for the reason that 
this equation is obtained through the equation (8) we can see that the 
$\mathscr{L}$ above needs to replaced by $\mathscr{L}^{(r-1)\rho_\mathbb{R}}$. Therefore in this case $\Phi_E$ must coincide with 
$(\mathscr{L}^{(r-1)\rho_\mathbb{R}})_{T''}$. 

This replacement occurs because we have $r-1$ parts on $G$ on which 
$\mathscr{L}$ must be interpreted in two ways when observing the induced framing on $M/S$  via $\phi_{\varGamma D}$. This also is due to adding the terms 
$[D^{\{k\}}(z_{2k-1}, xz_{2k})]$ to that of the map $\phi_\varGamma$. Therefore 
it is essential to adopt the above replacement in order to remove this doubling. Consequently from (9) we obtain Theorem 1.
\end{proof}

\begin{remark}
As seen just above, if we apply the proof of Theorem 1 to that of Theorem 2, then the doubling of the framing occurred there can be dissolved by thinking of every 
$r-1$ line bundles $L'$s over the components $T^2$ of $(T^2)^{r-1}$ as a trivial complex line bundle.   But instead its first Chern class becomes zero and so, according to Proposition 2.1 of ~\cite{LS}, the value of $e_\mathbb{C}$ must become zero and therefore we see that $e_\mathbb{C}([G, \mathscr{L}])=0$ holds for any of the above $G$ with rank $\ge 3$, i.e. with $r\ge 2$ ~\cite{AS}.
\end{remark}


\begin{thebibliography}{4}
\bibitem{A} J. F. ~Adams,  {\it On the groups $J(X)$--IV}, 
Topology {\bf 5} (1966), 21--71.
\bibitem{AS} M. F. ~Atiyah and L. Smith, 
{\it Compact Lie groups and the stable homotopy of spheres},
Topology {\bf 13} (1974), 135--142. 
\bibitem{LS} P. ~L\"{o}ffler and L. Smith,  
{\it Line bundles over framed manifolds}, Math. Z. {\bf 138} (1974), 35--52.
\bibitem{H} S. ~Helgason, {\it Differential Geometry and Symmetric Spaces},
Academic Press, 1962.
\bibitem{K} K. ~Knapp, {\it Rank and Adams filtration of a Lie group}, 
Topology {\bf 17} (1978), 41--52.
\bibitem{M} H. ~Minami, {\it The special unitary groups $SU(2n)$ as framed manifolds}, 
arXiv:2406.11878v4.
\bibitem{M2} H. ~Minami, {\it The groups $Sp(4n+1)$ and  $Spin(8n-2)$ as framed manifolds}, 
arXiv:2503.13748v1.
\end{thebibliography}
\end{document}